\documentclass{article}
\usepackage{amsmath}
\usepackage{amsfonts}
\usepackage{amssymb}
\usepackage{amsthm}
\usepackage{mathtools}
\usepackage{tikz}
\usepackage{tikz-cd}
\usepackage{enumerate}
\usetikzlibrary{patterns}

\newcommand{\N}{\ensuremath{\mathbb{N}}}

\newcommand{\F}{\ensuremath{\mathbb{F}}}

\newcommand{\nsgp}[1][]{\ensuremath{\triangleleft_{#1}}}

\newtheorem{theorem}{Theorem}[]

\newtheorem{lem}[theorem]{Lemma}

\theoremstyle{definition}
\newtheorem{defn}[theorem]{Definition}

\theoremstyle{remark}
\newtheorem*{rmk}{Remark}

\theoremstyle{plain}

\newcounter{introthmcount}
\setcounter{introthmcount}{0}

\theoremstyle{definition}

\title{A criterion for residual $p$-finiteness of arbitrary graphs of finite $p$-groups}
\author{Gareth Wilkes}
\begin{document}
\maketitle
\begin{abstract}
We establish conditions under which the fundamental group of a graph of finite $p$-groups is necessarily residually $p$-finite. The technique of proof is independent of previously established results of this type, and the result is also valid for infinite graphs of groups.
\end{abstract}
\subsection*{Introduction}
Residual properties of graphs of groups have long been a subject of study, and have for instance been particularly important in relation to 3-manifold groups \cite{Hempel87, WZ10, AF13, Wilkes16b}. Any study of such properties almost inevitably involves a reduction to the study of graphs of finite groups. The fundamental group of any finite graph of finite groups is well-known to be residually finite \cite[Proposition II.2.6.11]{SerreTrees}. However the situation for properties of {\em residual $p$-finiteness} is rather more subtle. Throughout the paper, let $p$ be a prime.
\begin{defn}
A group $G$ is {\em residually $p$-finite} if for any $g\in G\smallsetminus 1$ there exists a homomorphism from $G$ to a finite $p$-group whose kernel does not contain $g$; or equivalently, if there exists a normal subgroup of $G$ with index a power of $p$ which does not contain $g$. 
\end{defn}
It is emphatically {\em not} the case that the fundamental group of any finite graph of finite $p$-groups is residually $p$-finite, and one must impose a condition on the graph of groups for this to be the case. 

Higman \cite{Hig64} studied this problem in the case of amalgamated free products, and proved that the existence of chief series for the $p$-groups satisfying a certain compatibility condition (see condition I of Theorem \ref{MainThm}) is necessary and sufficient for an amalgamated free product of $p$-groups to be residually $p$-finite. A similar criterion for HNN extensions was proved by Chatzidakis \cite{chat94}.

In both of these papers the strategy of the proof is to use wreath products in the following manner. For $G=A_1\ast_B A_2$ or $G=A_1\ast_B$, iterated wreath products are used to construct an explicit finite $p$-group $P$ and a map $G\to P$ whose restriction to the $A_i$ is an injection. This map then has free kernel which is a normal subgroup of index a power of $p$. Since free groups are residually $p$-finite, this implies (see Lemma \ref{PassToSubgroups}) that the original fundamental group of the graph of groups is also residually $p$.

Since finite graphs of groups can be constructed step-by-step as iterated amalgamated free products and HNN extensions, these two papers together could be applied to prove that a condition on chief series implies that a finite graph of $p$-groups is residually $p$-finite. However such arguments cannot access infinite graphs of groups.

The purpose of this note is to give a new proof of a chief series condition for residual $p$-finiteness. There are no wreath products, and we will analyse all graphs of groups directly rather than building them up from one-edge graphs of groups (i.e.\ amalgams and HNN extensions). In particular, our proof is also valid for infinite graphs of groups. The criterion studied is given in Theorem \ref{MainThm}.

The scheme of the proof is to use the language of Bass--Serre theory to give a reformulation of our criterion in terms of an action on the tree dual to the graph of groups. In this formulation we can pass to an index $p$ normal subgroup given by a graph of groups which is `simpler' in a certain sense. This process concludes with a free group, which is well-known to be residually $p$-finite, thus proving the theorem. 

\subsection*{Graphs of groups and trees}
For this paper we will use the notions of graphs and graphs of groups as given by Serre \cite[Section I.5.3]{SerreTrees}. We recall these notions and set up notation as follows. A graph $X=VX\sqcup EX$ consists of a set $VX$ of vertices and a set $EX$ of edges. Each edge $y$ has an opposite edge $\bar y$, and has endpoints $o(y)$ and $t(y)$ with $o(\bar y) = t(y)$ and $\bar{ \bar y} =y$. 

A graph of groups $(X, G_\bullet)$ consists of the following data:
\begin{itemize}
\item a connected graph $X$;
\item a group $G_x$ for each $x\in VX \cup EX$, with $G_y = G_{\bar y}$ for $y\in EX$; and
\item monomorphisms $f_y\colon G_y\hookrightarrow G_{t(y)}$ for all $y\in EX$.
\end{itemize}
We fix a maximal subtree $T$ of $X$, which exists by Zorn's lemma if $X$ is infinite. Choose also an orientation $E^{+\!}X$ of $X$---that is, a subset $E^{+\!}X\subseteq EX$ such that for all $y\in EX$ exactly one of $y$ and $\bar y$ is in $E^{+\!}X$. Define a function $\epsilon \colon EX \to \{0,1\}$ by
\[\epsilon(y) = \begin{cases} 0 & \text{ if }y\in E^{+\!}X\\ 1 & \text{ if }y\notin E^{+\!}X\end{cases}\]
The fundamental group $G=\pi_1(X,G_\bullet)$ is then defined to be the group obtained from the free product of the $G_x$ (for $x\in VX\cup EX$) and the free group generated by letters $s_y$ (for $y\in EX$), subject to the following relations:
\begin{itemize}
\item $g = s_y^{1-\epsilon(y)} f_y(g) s_y^{\epsilon(y)-1}$ for $y\in EX$ and $g\in G_y$;
\item $s_y = 1$ for all $y\in ET$, and $s_{\bar y} = s_y$ for all $y\in EX$.
\end{itemize}
All the groups $G_x$ inject into $G$ under this construction, and we identify them with their images in $G$. Note that for an edge $y$ of $X$, the group $G_y$ is not necessarily contained in $G_{t(y)}$, but in a conjugate of it; and note that the map $f_y$ is equal to the composition
\begin{equation}\label{fevsinjection}
\begin{tikzcd}
f_y = \big(G_y \ar{r}{\subseteq} & s_y^{1-\epsilon(y)} G_{t(y)} s_y^{\epsilon(y)-1} \ar{r}& G_{t(y)} \big)  
\end{tikzcd}
\end{equation}
where the final map is left conjugation by $s_y^{\epsilon(y)-1}$.

The Bass--Serre tree of $G$ dual to $(X,G_\bullet)$ is the tree $\widetilde X$ with vertex and edge sets
\[V\widetilde X = \bigsqcup_{x\in VX} G/G_x, \quad E\widetilde X = \bigsqcup_{y\in EX} G/G_y \]
For $x\in X$ define $\tilde x$ to be the coset $1\cdot G_x$ viewed as an element of $\widetilde X$. The adjacency maps in $\widetilde X$ are
\[o(g\tilde y) = gs_y^{-\epsilon(y)}\widetilde{o(y)},\quad t(g\tilde y) = gs_y^{1-\epsilon(y)}\widetilde{t(y)} \]
There is a natural (left-)action of $G$ on $\widetilde X$ with quotient graph $X$ and with point stabilisers
\[ G(g\tilde x) \coloneqq {\rm stab}_G(g\tilde x) = gG_x g^{-1} \] 
Conversely \cite[Section I.5.4]{SerreTrees}, an action of $G$ on a tree $\widetilde X$ gives rise to a graph of groups $(X,G_\bullet)$ whose Bass--Serre tree is $G$-isomorphic to $\widetilde X$.
\subsection*{Results}
For the proof of the main theorem we will make use of the following standard fact. We include a proof for completeness. Note that there is no requirement that $G$ be finitely generated. The notation `$H\nsgp[p] G$' means `$H$ is a normal subgroup of $G$ with index a power of $p$'.
\begin{lem}\label{PassToSubgroups}
Let $H\nsgp[p] G$. If $H$ is residually $p$-finite then $G$ is residually $p$-finite.
\end{lem}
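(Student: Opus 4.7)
The plan is to prove the lemma directly from the definition: for any $g\in G\smallsetminus 1$ I need to exhibit a normal subgroup of $G$ with $p$-power index that avoids $g$. I would split into two cases based on whether $g\in H$. If $g\notin H$, then $H$ itself serves, since it is normal in $G$ with index a power of $p$ by hypothesis. The substantive case is $g\in H$.

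In that case, residual $p$-finiteness of $H$ provides some $K\nsgp[p] H$ not containing $g$; however, $K$ need not be normal in $G$, so I would pass to its normal core $\tilde K := \bigcap_{g'\in G} g'Kg'^{-1}$. By construction $\tilde K \nsgp G$, and taking $g'=1$ in the intersection shows $\tilde K \leq K$, so $g\notin \tilde K$. The heart of the argument is then to verify that $[G:\tilde K]$ is still a power of $p$.

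For this, the key observation (and the reason normality of $H$ in $G$ is essential) is that every conjugate $g'Kg'^{-1}$ lies in $H$, because conjugation by $g'$ restricts to an automorphism of the normal subgroup $H$; in particular $[H : g'Kg'^{-1}] = [H:K]$, a power of $p$. Moreover the $G$-orbit of $K$ under conjugation has size $[G:N_G(K)]$, which divides $[G:K] = [G:H]\cdot[H:K]$ and is therefore a power of $p$; so only finitely many distinct conjugates appear in the intersection. Consequently $H/\tilde K$ embeds into a finite product of the finite $p$-groups $H/g'Kg'^{-1}$ and is itself a finite $p$-group. Together with $G/H$ being a finite $p$-group, this forces $[G:\tilde K] = [G:H]\cdot[H:\tilde K]$ to be a power of $p$.

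I do not anticipate any serious obstacle. The one mild subtlety is that $H$ may be infinite and could \emph{a priori} possess infinitely many subgroups of index $[H:K]$; but the $G$-conjugates of $K$ nonetheless form a finite orbit by the index bound above, which is all that the argument requires.
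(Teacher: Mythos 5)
Your proposal is correct and follows essentially the same route as the paper: take the normal core of the finite-index normal subgroup $K \nsgp[p] H$ avoiding $g$, observe that only finitely many conjugates occur, and conclude that the core has $p$-power index in $G$. The paper bounds the number of conjugates by noting they depend only on the coset $g'H$ (since $K$ is normal in the normal subgroup $H$), while you bound the orbit by $[G:N_G(K)]$; these are equivalent observations and both yield the same argument.
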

\begin{proof}
Let $g\in G$. If $g\notin H$ then there is nothing to prove. If $g\in H$ then by assumption there is $U\nsgp[p] H$ such that $g\notin U$. Consider
\[ V=\bigcap_{g\in G} gUg^{-1}. \]
Since $U$ is normal in $H$ and $H$ has finite index in $G$, there are only finitely many subgroups in this intersection. All are normal in $H$ of $p$-power index, so the intersection $V$ also has $p$-power index in $H$, and hence in $G$. By construction $V$ is normal in $G$ and $g\notin V$. This completes the proof.
\end{proof}

We proceed now to the main theorem. The criterion for residual $p$-finiteness is stated in terms of chief series. 
\begin{defn}
A {\em chief series} for a finite $p$-group $P$ is a sequence 
\[P=P^{(0)} \geq P^{(1)}\geq \cdots \geq P^{(k)}\geq\cdots \]
of normal subgroups of $P$ such that each successive quotient \[\gamma^{(k)}(P)= P^{(k)}/P^{(k+1)}\] is either trivial or of order $p$ and such that $P^{(n)}=1$ for some $n$. The {\em length} of the chief series is the smallest $n$ such that $P^{(n)}=1$.
\end{defn} 
\begin{rmk}
This differs slightly from the usual definition of a chief series in that the sequence does not terminate. This is a purely formal difference enabling us to state the next theorem in its greatest generality.
\end{rmk}
\begin{theorem}\label{MainThm}
Let $(X,G_\bullet)$ be a graph of finite $p$-groups with fundamental group $G=\pi_1(X,G_\bullet)$.  Suppose there is a chief series $(G_x^{(k)})_{k\geq 0}$ of $G_x$ for each $x\in X$, such that the following two properties hold.
\begin{enumerate}[(I)]
\item\label{CondI} For all $k\in\N$ and all $y\in EX$, we have $f_y(G^{(k)}_y) = f_y(G_y) \cap G^{(k)}_{t(y)}$.
\item\label{CondII} For each $k$ there exists a family of injections $\phi^{(k)}_x\colon \gamma^{(k)}(G_x)\hookrightarrow \F_p$ (for $x\in X$) such that the diagrams
\[\begin{tikzcd}
\gamma^{(k)}(G_y)\ar{rr}{f_y} \ar[hook]{dr} & & \gamma^{(k)}(G_{t(y)})\ar[hook]{dl}\\
&\F_p & 
\end{tikzcd}\]
commute for all $y\in EX$. 
\end{enumerate}
Then $G$ is residually $p$-finite.
\end{theorem}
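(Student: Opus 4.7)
The plan is to descend through a tower $G=G_0\nsgp[p] G_1 \nsgp[p] G_2 \nsgp[p] \cdots$ of normal subgroups of $p$-power index, at each stage realising the next term as the fundamental group of a simpler graph of finite $p$-groups satisfying the same chief-series hypotheses; when the vertex groups become trivial the resulting term is a free group, and Lemma~\ref{PassToSubgroups} then propagates residual $p$-finiteness back up to $G$. The natural invariant driving the induction is $c(X,G_\bullet)=\sup_{x\in X}\ell(G_x)$, the supremum of chief-series lengths. I first argue under the assumption $c<\infty$ and afterwards reduce the infinite case by truncation.

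The central construction is a homomorphism $\psi\colon G\to\F_p$ defined by $g\mapsto \phi_x^{(0)}(gG_x^{(1)})$ for $g\in G_x$ and by $s_y\mapsto 0$ for each stable letter. The edge relation $g=s_y^{1-\epsilon(y)}f_y(g)s_y^{\epsilon(y)-1}$ reduces in $\F_p$ to $\phi_y^{(0)}(\bar g)=\phi_{t(y)}^{(0)}(\overline{f_y(g)})$, which is precisely the commuting triangle of condition~(\ref{CondII}) at level $k=0$ (condition~(\ref{CondI}) being what makes the induced quotient maps well-defined); the remaining relations hold trivially, so $\psi$ is well-defined. If $\psi=0$ then every $\phi_x^{(0)}$ is zero and hence $G_x^{(1)}=G_x$ for all $x$, and I reindex the chief series by $(G_x^{(k+1)})_{k\geq 0}$ with injections $\phi_x^{(k+1)}$ to drop $c$ by one and apply the inductive hypothesis. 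Otherwise $H:=\ker\psi$ is normal in $G$ of index exactly $p$; since $\F_p$ is abelian, its action on $\widetilde X$ has point stabilisers $H\cap gG_xg^{-1}=g(\ker\psi|_{G_x})g^{-1}=gG_x^{(1)}g^{-1}$, so Bass-Serre theory realises $H$ as the fundamental group of a graph of finite $p$-groups $(Y,H_\bullet)$ with vertex and edge groups conjugate to the $G_x^{(1)}$. Equipping each with the shifted chief series $(G_x^{(k+1)})_{k\geq 0}$ and injections $\phi_x^{(k+1)}$ reduces conditions~(\ref{CondI}) and~(\ref{CondII}) for $(Y,H_\bullet)$ at level $k$---via formula~(\ref{fevsinjection}) applied in the new graph of groups---to the originals at level $k+1$, so $(Y,H_\bullet)$ has complexity $c-1$ and the inductive hypothesis applies. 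The base case $c=0$ has every $G_x$ trivial, so $G$ is the free group on $\{s_y:y\in E^{+\!}X\setminus ET\}$, which is residually $p$-finite.

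For $c=\infty$, condition~(\ref{CondI}) ensures that for each $N\in\N$ the edge monomorphisms descend to injections $G_y/G_y^{(N)}\hookrightarrow G_{t(y)}/G_{t(y)}^{(N)}$, giving a truncated graph of finite $p$-groups $(X,G_\bullet/G_\bullet^{(N)})$ of complexity at most $N$ which falls under the finite case above; a standard Bass-Serre normal-form argument shows that any prescribed nontrivial $g\in G$ has nontrivial image in the truncated fundamental group for all sufficiently large $N$, yielding the required separation. The main obstacle I anticipate is the Bass-Serre bookkeeping in the induction step---carefully identifying the vertex, edge, and stable-letter structure of $(Y,H_\bullet)$ and verifying via~(\ref{fevsinjection}) that the new edge monomorphisms are genuinely compatible with the shifted chief series, rather than merely being conjugates of the original edge maps.
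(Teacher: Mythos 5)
Your proposal follows essentially the same route as the paper's proof: the homomorphism $\psi\colon G\to\F_p$ assembled from the $\phi^{(0)}_x$ (with $s_y\mapsto 0$), passage to its kernel acting on the Bass--Serre tree with point stabilisers $gG_x^{(1)}g^{-1}$ carrying the shifted chief series, induction on the supremum of chief-series lengths down to the free base case, and truncation by $G_\bullet^{(N)}$ together with reduced words supported on a finite subgraph for the unbounded case. The bookkeeping you flag as the main obstacle is exactly what the paper isolates as Lemma \ref{LemTrees}: a $G$-equivariant reformulation of conditions (\ref{CondI}) and (\ref{CondII}) on the tree, which makes their inheritance by the kernel automatic.
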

\begin{rmk}
The converse to Theorem \ref{MainThm} holds if the graph $X$ is finite. In this case, if $G$ is residually $p$-finite then there is a finite $p$-group $P$ and a map $\Phi\colon G\to P$ restricting to an injection on all the (finitely many) $G_x$. Taking intersections of $\Phi(G_x)$ with a chief series $(P^{(k)})_{0\leq k\leq n}$ of $P$ yields chief series of the $G_x$ satisfying the conditions of the theorem.
\end{rmk}
\begin{rmk}
If $X$ is finite, then the conditions of the theorem are also sufficient for $G$ to be conjugacy $p$-separable---this is equivalent to residual $p$-finiteness by \cite[Theorem 4.2]{Toinet13}.
\end{rmk}
\begin{rmk}
We note that in the case that $X$ is a tree, condition \ref{CondII} follows from condition \ref{CondI}: one may choose $\phi^{(k)}_x$ arbitrarily at one point of each connected component of the graph 
$Y_k = \{x\in X\mid \gamma^{(k)}(G_x)\neq 1\}$, whereupon the maps $\phi^{(k)}_x$ for the remaining $x$ in that component of $Y_k$ may be uniquely defined by forcing condition \ref{CondII} to hold.

If $X$ is not a tree, one may still define the $\phi^{(k)}_x$ consistently on a maximal forest $T$ of $Y_k$. For the remaining edges $y\in Y_k\cap E^{+\!}X$, one may again define $\phi^{(k)}_y$ so that condition \ref{CondII} holds. The only remaining cases of condition II that must be satisfied are for the edges $\bar y_0$ for $y_0\in E^{+\!}X\smallsetminus T$. Take an edge path $y_1, \ldots, y_m$ in $Y_k$ from $o(y_0)$ to $t(y_0)$. Then condition \ref{CondII} is easily seen to be satisfied if and only if the composite map
\[\begin{tikzcd}\gamma^{(k)}(G_{y_0}) \ar{r}{f_{\bar y_0}} & \gamma^{(k)}(G_{o(y_0)}) \ar{r}{f_{\bar y_1}^{-1}} & \gamma^{(k)}(G_{y_1}) \ar{r}{f_{y_1}} & \gamma^{(k)}(G_{t(y_1)})\ar{r} & \ldots \\
&\makebox[5em]{\ldots} \ar{r} & \gamma^{(k)}(G_{t(y_0)}) \ar{r}{f_{y_0}^{-1}} & \gamma^{(k)}(G_{y_0})  \end{tikzcd}\]
is the identity. This condition may be seen as the analogue in our context for the condition ($\ast\ast$) on HNN extensions given in \cite{chat94}.
\end{rmk}
The proof of Theorem \ref{MainThm} proceeds most smoothly if we translate conditions \ref{CondI} and \ref{CondII} of Theorem \ref{MainThm} into the language of the Bass--Serre tree dual to the graph of groups $(X,G_\bullet)$.
\begin{lem}\label{LemTrees}
Let $G$ be a group. Let $G$ act on a Bass--Serre tree $\widetilde X$ dual to a graph of finite $p$-groups $(X,G_\bullet)$. Then $(X,G_\bullet)$ satisfies the conditions of Theorem \ref{MainThm} if and only if there exists a chief series $(G(z)^{(k)})_{k\geq 0}$ for each stabiliser $G(z)$ of $z\in\widetilde X$ such that the following conditions hold.
\begin{enumerate}[(I\,$'$)]
\item For all $z\in E\widetilde X$, we have $G(z)^{(k)} = G(z)\cap G(t(z))^{(k)}$ and for each $z\in \widetilde X$ and each $g\in G$, we have $gG(z)^{(k)}g^{-1}= G(g\cdot z)^{(k)}$
\item For each $k$ there exists a family of injections $\psi^{(k)}_z\colon \gamma^{(k)}(G(z))\hookrightarrow \F_p$ for $z\in \widetilde X$ such that the diagrams
\[\begin{tikzcd}
\gamma^{(k)}(G(z))\ar{rr} \ar[hook]{dr} & & \gamma^{(k)}(G(t(z)))\ar[hook]{dl}\\
&\F_p & 
\end{tikzcd}\]
commute for all $z\in EX$, and such that for all $z\in \widetilde X$ and all $g\in G$ the diagram
\[\begin{tikzcd}
\gamma^{(k)}(G(z))\ar{rr}{\zeta_g} \ar[hook]{dr} & & \gamma^{(k)}(G(g\cdot z))\ar[hook]{dl}\\
&\F_p & 
\end{tikzcd}\]
commutes where $\zeta_g$ denotes left conjugation by $g$.
\end{enumerate}
\end{lem}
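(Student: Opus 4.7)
The plan is to give a dictionary between the two sets of data, exploiting the fact that every vertex or edge $z$ of $\widetilde X$ has the form $g\tilde x$ for some $x\in X$ and $g\in G$, with stabiliser $G(g\tilde x) = gG_xg^{-1}$. A $G$-equivariant family of chief series $(G(z)^{(k)})$ on $\widetilde X$ is therefore determined by the single choice $G(\tilde x)^{(k)} := G_x^{(k)}$ at each $x\in X$, the normality in $G_x$ needed for equivariance being automatic for a chief series. The two implications are then essentially forced, and the heart of the argument is to show that the edge halves of (I\,$'$) and (II\,$'$) at the distinguished edges $\tilde y$ (for $y\in EX$) correspond to (I) and (II) at $y$, via the identity (\ref{fevsinjection}).

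Concretely, for the forward direction, assume (I) and (II) and define $G(g\tilde x)^{(k)} := gG_x^{(k)}g^{-1}$, which is well defined by normality of $G_x^{(k)}$ in $G_x$, and $\psi^{(k)}_{g\tilde x} := \phi^{(k)}_x \circ \zeta_g^{-1}$. Well-definedness of $\psi^{(k)}_{g\tilde x}$ is the one small technical point: if $g\tilde x = g'\tilde x$ with $h = g^{-1}g' \in G_x$, one needs $\phi^{(k)}_x \circ \zeta_h = \phi^{(k)}_x$ on $\gamma^{(k)}(G_x)$, that is, that the conjugation action of $G_x$ on $\gamma^{(k)}(G_x)$ is trivial. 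This holds because $\gamma^{(k)}(G_x)$ has order dividing $p$, so its automorphism group has order coprime to $p$, while $G_x$ is a finite $p$-group.

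With these definitions, the equivariance portions of (I\,$'$) and (II\,$'$) are automatic, so it suffices to check the edge conditions at edges of the form $\tilde y$ for $y\in EX$. For such an edge we have $t(\tilde y) = s_y^{1-\epsilon(y)}\widetilde{t(y)}$ and $G(t(\tilde y)) = s_y^{1-\epsilon(y)}G_{t(y)}s_y^{\epsilon(y)-1}$, and by (\ref{fevsinjection}) the inclusion $G(\tilde y) \hookrightarrow G(t(\tilde y))$ composed with conjugation by $s_y^{\epsilon(y)-1}$ is exactly $f_y\colon G_y \hookrightarrow G_{t(y)}$. Under this identification (I) at $y$ becomes (I\,$'$) at $\tilde y$, and (II) at $y$ becomes the edge diagram of (II\,$'$) at $\tilde y$. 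For the reverse direction one sets $G_x^{(k)} := G(\tilde x)^{(k)}$ and $\phi^{(k)}_x := \psi^{(k)}_{\tilde x}$ and performs the inverse unpacking, using the $G$-equivariance of (I\,$'$) and (II\,$'$) with $g = s_y^{1-\epsilon(y)}$ to pass between data at $G(t(\tilde y))$ and at $G(\widetilde{t(y)}) = G_{t(y)}$. The main obstacle is really only the bookkeeping of the twisting by $s_y^{\epsilon(y)-1}$ from (\ref{fevsinjection}), together with the well-definedness observation above; once these are handled the rest is routine.
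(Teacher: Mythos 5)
Your proposal is correct and follows essentially the same route as the paper's own proof: the same definitions $G(g\tilde x)^{(k)} = gG_x^{(k)}g^{-1}$ and $\psi^{(k)}_{g\tilde x}=\phi^{(k)}_x\circ\zeta_{g^{-1}}$, the same well-definedness argument via the triviality of the conjugation action of a $p$-group on a group of order dividing $p$, and the same reduction of the edge conditions to (I) and (II) via \eqref{fevsinjection}. No gaps.
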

\begin{proof}
Suppose we have chief series $(G_x^{(k)})_{k \geq 0}$ for the graph of groups $(X,G_\bullet)$ satisfying conditions \ref{CondI} and \ref{CondII} of Theorem \ref{MainThm}. For $g\widetilde x\in \widetilde X$ define a chief series\[ G(g\widetilde x)^{(k)} = gG_x^{(k)}g^{-1}\]
of $G(g\tilde x)$. This is well-defined (that is, it is invariant under replacing $g$ by $gh$ for $h\in G_x$) because $G_x^{(k)}$ is normal in $G_x$. Further define the map $\psi^{(k)}_{g\tilde x}$ to be the composition
\[\begin{tikzcd} \gamma^{(k)}(G(g\tilde x)) \ar{r}{\zeta_{g^{-1}}} & \gamma^{(k)}(G(\tilde x)) = \gamma^{(k)}(G_x) \ar{r}{\phi^{(k)}_x} \ar{r} & \F_p\end{tikzcd}\]
This map is again well-defined under replacing $g$ by $gh$ for $h\in G_x$ because the conjugation action of $G_x$ on itself induces the identity map on the $\gamma^{(k)}(G_x)$---a $p$-group cannot induce a non-trivial automorphism of either the trivial group or of a cyclic group of order $p$.

The parts of conditions I$'$ and II$'$ concerning invariance under $G$-conjugation hold by construction. The conditions on edges follow from conditions I and II of Theorem \ref{MainThm} together with the $G$-conjugation invariance, by recalling from \eqref{fevsinjection} that the inclusion of an edge stabiliser into a vertex stabiliser is, up to a $G$-conjugacy, equal to the map $f_y$ followed by a conjugation by $s_y^{\epsilon(y)-1}$.

Conversely, given chief series for the point stabilisers $G(z)$ satisfying I$'$ and II$'$, we may define 
\[G_x^{(k)} = G(\tilde x)^{(k)}, \quad \phi^{(k)}_x = \psi^{(k)}_{\tilde x} \colon \gamma^{(k)}(G_x)\to \F_p \]
for $x\in X$. Conditions I and II now follow from conditions I$'$ and II$'$ via the expression \eqref{fevsinjection} of the maps $f_y$ as a composition of an inclusion of edge stabilisers and a conjugacy.
\end{proof}
\begin{proof}[Proof of Theorem \ref{MainThm}]
Suppose first that the chief series $(G^{(k)})_{k\geq 0}$ all have length at most $N$ for some $N$---this is automatic if $X$ is finite. We prove the theorem by induction on $N$. If $N=0$ then $G$ is free, hence is residually $p$-finite. Suppose $N>0$. The maps $\phi^{(0)}_x$ in condition \ref{CondII} define, by the universal property of the fundamental group of a graph of groups, a homomorphism $\Phi\colon G\to \F_p$ whose restriction to each $G_x$ is the composite
\[\begin{tikzcd} G_x\ar{r} &G_x / G^{(1)}_x = \gamma^{(0)}(G_x) \ar[hook]{r}{\phi^{(0)}_x} & \F_p\end{tikzcd}\]
Let $H=\ker\Phi$. The group $G$ acts on its Bass--Serre tree $\widetilde X$ as in Lemma \ref{LemTrees}. Consider the action of $H$ on $\widetilde X$. The point stabilisers $H(z)$ for $z\in \widetilde X$ are by construction the groups $G(z)^{(1)}$. The chief series $H(z)^{(k)}=G(z)^{(k+1)}$ now automatically satisfy conditions I$'$ and II$'$, and all have length at most $N-1$. Hence by Lemma \ref{LemTrees} the graph of groups decomposition of $H$ dual to its action on $\widetilde X$ is equipped with chief series of length  at most $N-1$ satisfying conditions \ref{CondI} and \ref{CondII}. Therefore by induction $H$ is residually $p$-finite. Since $H$ is a normal subgroup of index $p$ in $G$, it follows from Lemma \ref{PassToSubgroups} that $G$ is also residually $p$-finite.

Now move to the general case. Let $g\in G\smallsetminus\{1\}$. In the graph of groups $(X,G_\bullet)$, the element $g$ is equal to a reduced word \cite[Section I.5.2]{SerreTrees} which is supported on some finite subgraph $Z$ of $X$. Let $N$ be such that the chief series $(G_z^{(k)})_{k\geq 0}$ of $G_z$ has length at most $N$ for all $z\in Z$. We may take the quotient of each $G_x$ by $G_x^{(N)}$ to obtain a new graph of groups $(X,G_\bullet / G^{(N)}_\bullet)$. There is a natural map of fundamental groups 
\[\Phi\colon G=\pi_1(X,G_\bullet) \to \pi_1(X,G_\bullet / G^{(N)}_\bullet)\eqqcolon G'\]
Then $\Phi(g)$ is non-trivial in $G'$, for it is given by a reduced word---the same reduced word as in $G$, since $G^{(N)}_z=1$ for all $z\in Z$. But $G'$ is residually $p$-finite by the first part of the theorem, since the chief series for all $G_\bullet / G^{(N)}_\bullet$ have length at most $N$. Therefore there is some map $\Psi\colon G'\to P$ for a finite $p$-group $P$ such that $\Psi\Phi(g)\neq 1$. This attests that $G$ is residually $p$-finite.
\end{proof}
\subsection*{Acknowledgements}
The author was supported by a Junior Research Fellowship from Clare College, Cambridge.
\bibliographystyle{alpha}
\bibliography{EmbGG.bib}
\end{document}